\documentclass[a4paper]{amsart}
\usepackage{graphicx}
\usepackage{amssymb}
\usepackage{amsmath}
\usepackage{amsthm}
\usepackage{amscd}
\usepackage[all,2cell]{xy}

\UseAllTwocells \SilentMatrices
\newtheorem{thm}{Theorem}[section]

\newtheorem{cor}[thm]{Corollary}
\newtheorem{lem}[thm]{Lemma}

\newtheorem{prop}[thm]{Proposition}
\theoremstyle{definition}

\theoremstyle{remark}

\numberwithin{equation}{section}

\begin{document}
\title[Unifying two results of D. Orlov]
{Unifying two results of D. Orlov}
\author[  Xiao-Wu Chen
] {Xiao-Wu Chen}
\thanks{This project was supported by Alexander von Humboldt Stiftung and National Natural Science
Foundation of China (No. 10971206)}
\date{\today}
\thanks{E-mail:
xwchen$\symbol{64}$mail.ustc.edu.cn}
\keywords{singularity category, quotient functor, Schur functor}%

\maketitle

\dedicatory{}%
\commby{}%
\begin{center}
\end{center}

\begin{abstract}
Let $\mathbb{X}$ be a noetherian separated scheme $\mathbb{X}$ of finite Krull dimension
which has enough locally free sheaves of finite rank and let $U\subseteq \mathbb{X}$ be an open subscheme.
We prove that the singularity category of $U$ is triangle equivalent to the Verdier
quotient category of the singularity category of $\mathbb{X}$ with
respect to the thick triangulated subcategory generated by sheaves
supported in the complement of $U$. The result unifies two results
of D. Orlov. We also prove a noncommutative version of this result.
\end{abstract}

\section{Introduction}
Let $\mathbb{X}$ be a noetherian separated scheme of finite Krull dimension which has enough
locally free  sheaves of finite rank. The last assumption means that any coherent sheaf on
$\mathbb{X}$ is a quotient of a locally free sheaf of finite rank (\cite[Chapter III, Ex. 6.4]{Har77}).
Denote by ${\rm coh}\;
\mathbb{X}$ the category of coherent sheaves on $\mathbb{X}$ and by
$\mathbf{D}^b({\rm coh}\;\mathbb{X})$ the bounded derived category.
We will identify ${\rm coh}\; \mathbb{X}$ as the full subcategory of
$\mathbf{D}^b({\rm coh}\;\mathbb{X})$  consisting of stalk complexes
concentrated at degree zero.

Recall that a complex in $\mathbf{D}^b({\rm coh}\; \mathbb{X})$ is \emph{perfect} provided that
locally it is isomorphic to a bounded complex of locally free
sheaves of finite rank (\cite{BGI71}). Under our assumption, perfect complexes are equal to those
isomorphic to a bounded complex of locally free sheaves of finite rank; see \cite[Remark 1.7]{Or04}.
Denote by ${\rm perf}\; \mathbb{X}$ the full triangulated
subcategory of $\mathbf{D}^b({\rm coh}\; \mathbb{X})$ consisting of
perfect complexes; it is a \emph{thick} subcategory, that is, it is
closed under taking direct summands (by \cite[Proposition 1.11]{Or06}).

For the scheme $\mathbb{X}$, D. Orlov introduced in \cite{Or04} a
new invariant, called the \emph{singularity category} of
$\mathbb{X}$, which is defined to be the Verdier quotient
triangulated category $\mathbf{D}_{\rm
sg}(\mathbb{X})=\mathbf{D}^b({\rm coh}\; \mathbb{X})/{\rm perf}\;
\mathbb{X}$. Denote by $q\colon \mathbf{D}^b({\rm coh}\;
\mathbb{X})\rightarrow \mathbf{D}_{\rm sg}(\mathbb{X})$ the quotient
functor. The singularity category $\mathbf{D}_{\rm sg}(\mathbb{X})$
captures certain properties of the singularity of $\mathbb{X}$. For
example, the scheme $\mathbb{X}$ is regular if and only if its
singularity category $\mathbf{D}_{\rm sg}(\mathbb{X})$ is trivial.
Singularity categories are closely related to the Homological Mirror
Symmetry Conjecture; see \cite{Or04, Or06, Or05, Or09}.

We will explain two results of Orlov which we are going to unify.
Let $j\colon U\hookrightarrow \mathbb{X}$ be an open immersion. Note
that the inverse image functor $j^*\colon {\rm coh}\;
\mathbb{X}\rightarrow {\rm coh}\; U$ is exact and then it extends to
a triangle functor $\mathbf{D}^b( {\rm coh}\; \mathbb{X})
\rightarrow \mathbf{D}^b( {\rm coh}\; U)$, which is still denoted by
$j^*$. This functor sends perfect complexes to perfect complexes and
then it induces a triangle functor $\bar{j^*}\colon \mathbf{D}_{\rm
sg}(\mathbb{X}) \rightarrow  \mathbf{D}_{\rm sg}(U)$. Denote by
${\rm Sing}(\mathbb{X})$ the singular locus of $\mathbb{X}$.

The first result is about a local property of singularity
categories; see \cite[Proposition 1.14]{Or04}.

\begin{prop}{\rm (Orlov)}\label{prop:Orlov1}
Use the notation as above. Assume that ${\rm
Sing}(\mathbb{X})\subseteq U$. Then the triangle functor
$\bar{j^*}\colon \mathbf{D}_{\rm sg}(\mathbb{X}) \rightarrow
\mathbf{D}_{\rm sg}(U)$ is an equivalence. \hfill $\square$
\end{prop}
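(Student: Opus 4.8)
The plan is to show that $\bar{j^*}$ is both essentially surjective and fully faithful. Essential surjectivity is the easier half: given a coherent sheaf $\mathcal{F}$ on $U$, since $\mathbb{X}$ has enough locally free sheaves of finite rank, one extends $\mathcal{F}$ to a coherent sheaf $\tilde{\mathcal{F}}$ on $\mathbb{X}$ with $j^*\tilde{\mathcal{F}}\cong\mathcal{F}$ (a standard quasi-coherent extension followed by taking a coherent subsheaf restricting to $\mathcal{F}$). Since every object of $\mathbf{D}_{\rm sg}(U)$ is isomorphic, up to shift, to a coherent sheaf (using that $\mathbf{D}^b({\rm coh}\,U)$ is generated by ${\rm coh}\,U$ and that one can truncate a perfect complex away), essential surjectivity of $\bar{j^*}$ follows.

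The substance is full faithfulness. The key structural input is that the kernel of the quotient functor $\mathbf{D}_{\rm sg}(\mathbb{X})\to\mathbf{D}_{\rm sg}(U)$ should be controlled by the closed complement $Z=\mathbb{X}\setminus U$. Concretely, I would first identify, inside $\mathbf{D}^b({\rm coh}\,\mathbb{X})$, the thick subcategory $\mathbf{D}^b_Z({\rm coh}\,\mathbb{X})$ of complexes with cohomology supported on $Z$, and recall (Verdier localization for the open–closed decomposition) that $j^*$ induces a triangle equivalence $\mathbf{D}^b({\rm coh}\,\mathbb{X})/\mathbf{D}^b_Z({\rm coh}\,\mathbb{X})\xrightarrow{\sim}\mathbf{D}^b({\rm coh}\,U)$. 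Passing to singularity categories, one gets that $\mathbf{D}_{\rm sg}(U)$ is the Verdier quotient of $\mathbf{D}_{\rm sg}(\mathbb{X})$ by the image of $\mathbf{D}^b_Z({\rm coh}\,\mathbb{X})$ — this is exactly the general result the paper is aiming for, so in the present proposition I would instead argue directly that this image is already zero in $\mathbf{D}_{\rm sg}(\mathbb{X})$ under the hypothesis ${\rm Sing}(\mathbb{X})\subseteq U$. The point is that if $\mathcal{F}$ is a coherent sheaf supported on $Z$, then $\mathcal{F}$ is supported in the regular locus, and over the regular locus every coherent sheaf has finite projective dimension, so $\mathcal{F}$ — viewed locally, using the affine-local criterion for perfectness — is a perfect complex on $\mathbb{X}$, hence $q(\mathcal{F})=0$.

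Granting that $\mathbf{D}^b_Z({\rm coh}\,\mathbb{X})\subseteq{\rm perf}\,\mathbb{X}$, full faithfulness of $\bar{j^*}$ follows from a calculus-of-fractions computation: a morphism $q_U(\mathcal{E})\to q_U(\mathcal{F})$ in $\mathbf{D}_{\rm sg}(U)$ between images of objects of $\mathbf{D}^b({\rm coh}\,\mathbb{X})$ is represented by a roof whose denominator is a morphism with cone in ${\rm perf}\,U$; using essential surjectivity one lifts the cone and the roof to $\mathbb{X}$ up to something supported on $Z$, and then the inclusion $\mathbf{D}^b_Z\subseteq{\rm perf}$ shows the lifted denominator is again invertible in $\mathbf{D}_{\rm sg}(\mathbb{X})$, giving a preimage; an analogous argument with the difference of two roofs shows faithfulness. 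The main obstacle I anticipate is the bookkeeping in this lifting step — ensuring that lifting a triangle from $U$ to $\mathbb{X}$ can be done so that all error terms land in $\mathbf{D}^b_Z({\rm coh}\,\mathbb{X})$, which ultimately rests on the equivalence $\mathbf{D}^b({\rm coh}\,\mathbb{X})/\mathbf{D}^b_Z({\rm coh}\,\mathbb{X})\xrightarrow{\sim}\mathbf{D}^b({\rm coh}\,U)$ and on the fact that a perfect complex on $U$ extends to a perfect complex on $\mathbb{X}$ after modification on $Z$; the rest is formal.
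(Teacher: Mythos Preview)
Your proposal is correct and follows essentially the same route as the paper. The paper deduces Proposition~\ref{prop:Orlov1} from Theorem~\ref{thm:1} via Corollary~\ref{cor:Cor1}: the two ingredients are exactly the ones you isolate, namely the Verdier localization $\mathbf{D}^b({\rm coh}\;\mathbb{X})/\mathbf{D}^b_Z({\rm coh}\;\mathbb{X})\simeq\mathbf{D}^b({\rm coh}\;U)$ (Lemma~\ref{lem:Lemma2}) and the observation that ${\rm Sing}(\mathbb{X})\subseteq U$ forces ${\rm coh}_Z\;\mathbb{X}\subseteq{\rm perf}\;\mathbb{X}$ by the stalkwise criterion for perfectness.

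The one organizational difference is that the paper replaces your hands-on calculus-of-fractions lifting by the abstract Lemma~\ref{lem:Lemma1}: if $F\colon\mathcal{T}\to\mathcal{T}'$ is a quotient functor and $F(\mathcal{N})\subseteq\mathcal{N}'$, then the induced $\mathcal{T}/\mathcal{N}\to\mathcal{T}'/\mathcal{N}'$ is again a quotient functor. Applied with $F=j^*$, $\mathcal{N}={\rm perf}\;\mathbb{X}$, $\mathcal{N}'={\rm perf}\;U$, this gives immediately that $\bar{j^*}$ is a quotient functor, so one only needs ${\rm Ker}\;\bar{j^*}=0$; and under the hypothesis this is just the statement that a complex whose restriction to $U$ is perfect is already perfect on $\mathbb{X}$ (perfectness is local, and $Z$ lies in the regular locus). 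This absorbs all of the ``bookkeeping'' you anticipate in lifting roofs and triangles, and in particular makes your separate essential-surjectivity argument and the worry about extending perfect complexes from $U$ to $\mathbb{X}$ unnecessary.
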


Set $Z=\mathbb{X}\backslash U$ to be the complement of $U$. Denote by
${\rm coh}_Z\; \mathbb{X}$ the full subcategory of ${\rm coh}\;
\mathbb{X}$ consisting of sheaves supported in $Z$. Denote by
$\mathbf{D}^b_Z( {\rm coh}\; \mathbb{X})$ the full triangulated
subcategory of $\mathbf{D}^b( {\rm coh}\; \mathbb{X})$ consisting of
complexes with cohomologies supported in $Z$.

 For a subset
$\mathcal{S}$ of objects in a triangulated category $\mathcal{T}$ we denote by
${\rm thick}\langle \mathcal{S}\rangle$ the smallest thick
triangulated subcategory of $\mathcal{T}$ containing $\mathcal{S}$.
The subcategory ${\rm thick}\langle \mathcal{S}\rangle$ of
$\mathcal{T}$ is said to be \emph{generated} by $\mathcal{S}$. For
example, we have that $\mathbf{D}^b_Z( {\rm coh}\; \mathbb{X})={\rm
thick}\langle {\rm coh}_Z\; \mathbb{X}\rangle$ (by \cite[Chapter I, Lemma
7.2(4)]{Har66}).

Here is the second result of our interest; compare \cite[Proposition
2.7]{Or09}.

\begin{prop}{\rm (Orlov)}\label{prop:Orlov2}
Use the notation as above. Assume that ${\rm
Sing}(\mathbb{X})\subseteq Z$. Then we have $\mathbf{D}_{\rm
sg}(\mathbb{X})={\rm thick}\langle q({\rm coh}_Z\;
\mathbb{X})\rangle$. \hfill $\square$
\end{prop}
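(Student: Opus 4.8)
The plan is to show that every object of $\mathbf{D}_{\rm sg}(\mathbb{X})$ already lies in the thick subcategory generated by the images $q({\rm coh}_Z\;\mathbb{X})$, using the hypothesis ${\rm Sing}(\mathbb{X})\subseteq Z$, i.e. that $\mathbb{X}$ is regular away from $Z$. First I would reduce the statement to a claim about coherent sheaves: since ${\rm thick}\langle {\rm coh}\;\mathbb{X}\rangle=\mathbf{D}^b({\rm coh}\;\mathbb{X})$ and $q$ is a triangle functor, it suffices to prove that $q(F)\in {\rm thick}\langle q({\rm coh}_Z\;\mathbb{X})\rangle$ for every coherent sheaf $F$ on $\mathbb{X}$. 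So fix such an $F$.

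The key point is that, because the singular locus is contained in $Z$, the restriction $j^*F$ to $U=\mathbb{X}\setminus Z$ is a coherent sheaf on a regular scheme, hence has finite projective dimension, hence $j^*F$ is a perfect complex on $U$; equivalently $\bar{j^*}(q F)=0$ in $\mathbf{D}_{\rm sg}(U)$. By Orlov's Proposition~\ref{prop:Orlov1} (applied with the roles reversed, or rather directly: here ${\rm Sing}(\mathbb{X})\subseteq Z$ does \emph{not} give ${\rm Sing}(\mathbb{X})\subseteq U$, so I cannot invoke it as an equivalence) I instead argue by hand. Concretely, I would take a bounded complex $P^\bullet$ of locally free sheaves of finite rank on $\mathbb{X}$ together with a morphism $P^\bullet\to F$ that becomes a quasi-isomorphism after applying $j^*$; such a complex exists by truncating a (locally free, finite rank) resolution of $F$ at a sufficiently high stage, using that $j^*F$ has finite projective dimension on the regular scheme $U$ and that $\mathbb{X}$ has enough locally free sheaves of finite rank. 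Let $C$ be the cone of $P^\bullet\to F$ in $\mathbf{D}^b({\rm coh}\;\mathbb{X})$. Then $j^*C\cong 0$, so all cohomology sheaves of $C$ are supported in $Z$, i.e. $C\in \mathbf{D}^b_Z({\rm coh}\;\mathbb{X})={\rm thick}\langle {\rm coh}_Z\;\mathbb{X}\rangle$. Applying $q$ and using that $q(P^\bullet)=0$ (as $P^\bullet$ is perfect), the triangle $P^\bullet\to F\to C\to$ shows $q(F)\cong q(C)\in {\rm thick}\langle q({\rm coh}_Z\;\mathbb{X})\rangle$.

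Putting these together: every $q(F)$ with $F\in{\rm coh}\;\mathbb{X}$ lies in ${\rm thick}\langle q({\rm coh}_Z\;\mathbb{X})\rangle$, hence so does every object of $\mathbf{D}_{\rm sg}(\mathbb{X})$ since these objects generate $\mathbf{D}_{\rm sg}(\mathbb{X})$ as a thick subcategory of itself; the reverse inclusion is trivial, giving the asserted equality. The main obstacle I anticipate is the construction of the truncated locally free complex $P^\bullet$ and the verification that the natural map $P^\bullet\to F$ restricts to a quasi-isomorphism on $U$: this requires knowing that coherent sheaves on the regular scheme $U$ have finite and globally bounded projective dimension (finite Krull dimension is used here), and a little care that the truncation of a global resolution, though no longer acyclic on $\mathbb{X}$, has cone with cohomology concentrated in $Z$. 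A secondary subtlety is making sure $P^\bullet$ can be chosen genuinely bounded with finite-rank locally free terms, which is exactly where the hypothesis that $\mathbb{X}$ has enough locally free sheaves of finite rank (together with \cite[Remark 1.7]{Or04}) enters.
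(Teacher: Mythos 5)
Your overall strategy (reduce to coherent sheaves $F$, produce a bounded locally free complex $P^\bullet\to F$, and put the cone in $\mathbf{D}^b_Z({\rm coh}\;\mathbb{X})$) founders on the key step you flagged as a "little care" issue: the truncation of a global locally free resolution does \emph{not} restrict to a quasi-isomorphism on $U$, and its cone is \emph{not} supported in $Z$. Concretely, if $P^\bullet=(\mathcal{E}^{-n}\to\cdots\to\mathcal{E}^0)$ is the brutal truncation of a resolution of $F$, the cone of $P^\bullet\to F$ is (a shift of) the $n$-th syzygy sheaf $\mathcal{K}$. What the regularity of $U$ and finiteness of Krull dimension give you is that $j^*\mathcal{K}$ is \emph{locally free} for $n$ large --- not that it is zero. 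So $j^*C\cong j^*\mathcal{K}[n+1]$ is a nonzero perfect complex on $U$, $C\notin\mathbf{D}^b_Z({\rm coh}\;\mathbb{X})$, and the conclusion $q(F)\in{\rm thick}\langle q({\rm coh}_Z\;\mathbb{X})\rangle$ does not follow. Nor can you repair this by choosing some other bounded locally free $P^\bullet$ with $j^*P^\bullet\to j^*F$ a quasi-isomorphism: such a complex need not exist globally (this is exactly the idempotent-completion subtlety that motivates \cite{Or09}), and in general one really only gets that $q(F)$ is a \emph{direct summand} of an object coming from $\mathbf{D}^b_Z({\rm coh}\;\mathbb{X})$ --- which is precisely why the statement involves the \emph{thick} closure and not just the triangulated subcategory generated by $q({\rm coh}_Z\;\mathbb{X})$.

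The correct argument (Orlov's, reproduced in the proof of Theorem \ref{thm:1}) works not with the cone of $P^\bullet\to F$ but with the morphism $\alpha\colon F\to\mathcal{K}[n]$ given by the other brutal truncation: its cone is perfect, so $q(\alpha)$ is an isomorphism; and $j^*(\alpha)$ lies in ${\rm Hom}(j^*F,j^*\mathcal{K}[n])\cong{\rm Ext}^n(j^*F,j^*\mathcal{K})$, which vanishes for $n$ large because both $j^*F$ and $j^*\mathcal{K}$ are perfect on the finite-dimensional scheme $U$ (\cite[Proposition 1.11]{Or06}). By Lemma \ref{lem:Lemma2}, $\alpha$ then factors through an object of $\mathbf{D}^b_Z({\rm coh}\;\mathbb{X})$, so the isomorphism $q(\alpha)$ factors through $q(\mathbf{D}^b_Z({\rm coh}\;\mathbb{X}))$ and $q(F)$ is a direct summand of such an object; thickness finishes the proof. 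Note also that the paper itself obtains Proposition \ref{prop:Orlov2} more cheaply as a corollary of Theorem \ref{thm:1}: ${\rm Sing}(\mathbb{X})\subseteq Z$ means $U$ is regular, i.e.\ $\mathbf{D}_{\rm sg}(U)=0$, and the equivalence of Theorem \ref{thm:1} then forces $\mathbf{D}_{\rm sg}(\mathbb{X})={\rm thick}\langle q({\rm coh}_Z\;\mathbb{X})\rangle$.
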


The aim of this note is to prove the following result which somehow
unifies the two results of Orlov mentioned above. Let us point out that it is in spirit close
to a result by Krause (\cite[Proposition 6.9]{Kr05}).

\begin{thm}\label{thm:1}
Use the notation as above. Then the triangle functor
$\bar{j^*}\colon \mathbf{D}_{\rm sg}(\mathbb{X}) \rightarrow
\mathbf{D}_{\rm sg}(U)$ induces a triangle equivalence
$$ \mathbf{D}_{\rm sg}(\mathbb{X})/{\rm thick}\langle
q({\rm coh}_Z\; \mathbb{X}) \rangle \; \simeq \; \mathbf{D}_{\rm
sg}(U).$$
\end{thm}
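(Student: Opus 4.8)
The natural strategy is to apply the universal property of Verdier quotients twice and identify the kernel. The functor $\bar{j^*}\colon \mathbf{D}_{\rm sg}(\mathbb{X})\to\mathbf{D}_{\rm sg}(U)$ kills every object of $q({\rm coh}_Z\;\mathbb{X})$ (since $j^*$ of a sheaf supported on $Z=\mathbb{X}\setminus U$ vanishes), hence kills the thick subcategory ${\rm thick}\langle q({\rm coh}_Z\;\mathbb{X})\rangle$, so it factors through a canonical triangle functor
$$F\colon \mathbf{D}_{\rm sg}(\mathbb{X})/{\rm thick}\langle q({\rm coh}_Z\;\mathbb{X})\rangle \To \mathbf{D}_{\rm sg}(U).$$
To produce a quasi-inverse I would work at the level of bounded derived categories first. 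Recall $j^*\colon\mathbf{D}^b({\rm coh}\;\mathbb{X})\to\mathbf{D}^b({\rm coh}\;U)$ is a Verdier localization with kernel $\mathbf{D}^b_Z({\rm coh}\;\mathbb{X})={\rm thick}\langle{\rm coh}_Z\;\mathbb{X}\rangle$; this is the standard localization theorem for the open immersion $j$ (one uses that any coherent sheaf on $U$ extends to a coherent sheaf on $\mathbb{X}$, and that two extensions differ by something supported on $Z$, plus the hypothesis that $\mathbb{X}$ has enough locally free sheaves so that $j^*$ is essentially surjective on derived categories). Composing with $q_U\colon\mathbf{D}^b({\rm coh}\;U)\to\mathbf{D}_{\rm sg}(U)$ gives a localization functor $\mathbf{D}^b({\rm coh}\;\mathbb{X})\to\mathbf{D}_{\rm sg}(U)$ whose kernel is the thick subcategory generated by ${\rm perf}\;\mathbb{X}$ together with $\mathbf{D}^b_Z({\rm coh}\;\mathbb{X})$, equivalently by ${\rm perf}\;\mathbb{X}\cup{\rm coh}_Z\;\mathbb{X}$.

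Next I would compute the quotient on the other side. By the octahedral/third-isomorphism theorem for Verdier quotients,
$$\mathbf{D}_{\rm sg}(\mathbb{X})/{\rm thick}\langle q({\rm coh}_Z\;\mathbb{X})\rangle \;=\; \bigl(\mathbf{D}^b({\rm coh}\;\mathbb{X})/{\rm perf}\;\mathbb{X}\bigr)\big/ {\rm thick}\langle q({\rm coh}_Z\;\mathbb{X})\rangle \;\simeq\; \mathbf{D}^b({\rm coh}\;\mathbb{X})/\mathcal{K},$$
where $\mathcal{K}$ is the preimage in $\mathbf{D}^b({\rm coh}\;\mathbb{X})$ of ${\rm thick}\langle q({\rm coh}_Z\;\mathbb{X})\rangle$ under $q$; by a standard lemma on preimages of thick subcategories, $\mathcal{K}={\rm thick}\langle {\rm perf}\;\mathbb{X}\cup{\rm coh}_Z\;\mathbb{X}\rangle$. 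This is exactly the kernel computed in the previous step, so both categories are identified with $\mathbf{D}^b({\rm coh}\;\mathbb{X})/{\rm thick}\langle{\rm perf}\;\mathbb{X}\cup{\rm coh}_Z\;\mathbb{X}\rangle$, and chasing the universal properties shows this identification is given precisely by $F$. Hence $F$ is an equivalence. As a sanity check, Proposition~\ref{prop:Orlov1} is the case ${\rm coh}_Z\;\mathbb{X}\subseteq{\rm perf}\;\mathbb{X}$ in $\mathbf{D}_{\rm sg}$ (so the thick subcategory is zero), and Proposition~\ref{prop:Orlov2} is the case $\mathbf{D}_{\rm sg}(U)=0$.

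The main obstacle is the first step: establishing that $j^*$ realizes $\mathbf{D}^b({\rm coh}\;U)$ as the Verdier quotient $\mathbf{D}^b({\rm coh}\;\mathbb{X})/\mathbf{D}^b_Z({\rm coh}\;\mathbb{X})$, with particular care to \emph{essential surjectivity} of the induced functor on quotients and to a clean description of the kernel. Essential surjectivity of $j^*\colon{\rm coh}\;\mathbb{X}\to{\rm coh}\;U$ up to the relevant subcategories requires extending coherent sheaves (and complexes) from $U$ to $\mathbb{X}$, which is where the noetherian, separated, finite-Krull-dimension hypotheses and ``enough locally free sheaves'' are genuinely used; and the identification of morphisms in the quotient with morphisms in $\mathbf{D}^b({\rm coh}\;U)$ uses that every arrow out of an object in $\mathbf{D}^b_Z({\rm coh}\;\mathbb{X})$ can be completed appropriately. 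Once this localization statement is in hand, the remainder is formal manipulation with Verdier quotients and the lemma identifying preimages of thick subcategories, which I would carry out explicitly but without surprises.
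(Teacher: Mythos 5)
Your formal framework is sound: the factorization of $\bar{j^*}$ through the quotient, the third-isomorphism theorem for Verdier quotients, and the lemma that $q^{-1}\bigl({\rm thick}\langle q(\mathcal{S})\rangle\bigr)={\rm thick}\langle {\rm perf}\;\mathbb{X}\cup\mathcal{S}\rangle$ are all correct and match the formal half of the paper's argument (its Lemma~\ref{lem:Lemma1} together with Verdier's corollary). The gap is in the other half. You assert that the kernel of $q_U\circ j^*\colon \mathbf{D}^b({\rm coh}\;\mathbb{X})\to\mathbf{D}_{\rm sg}(U)$ is ${\rm thick}\langle {\rm perf}\;\mathbb{X}\cup{\rm coh}_Z\;\mathbb{X}\rangle$ as if this followed from the composite-of-localizations formalism. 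It does not. That kernel is, by definition, $(j^*)^{-1}({\rm perf}\;U)=\{\mathcal{F}^\bullet : j^*\mathcal{F}^\bullet \text{ perfect}\}$, and your ``standard lemma on preimages'' only identifies $(j^*)^{-1}\bigl({\rm thick}\langle j^*({\rm perf}\;\mathbb{X})\rangle\bigr)$. So your claim is equivalent to ${\rm perf}\;U={\rm thick}\langle j^*({\rm perf}\;\mathbb{X})\rangle$, i.e.\ that every perfect complex on $U$ lies in the thick closure of restrictions of perfect complexes on $\mathbb{X}$. This is a genuine theorem (it is essentially the Thomason--Trobaugh extension result that $\mathcal{G}^\bullet\oplus\mathcal{G}^\bullet[1]$ extends to a perfect complex on $\mathbb{X}$), not a formal consequence of $j^*$ being a localization, and it is precisely the mathematical content of Theorem~\ref{thm:1}: without it, Corollaries~\ref{cor:Cor1} and~\ref{cor:Cor2} would be formal too, which they are not. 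Relatedly, you identify the ``main obstacle'' as the localization statement for $j^*$ on bounded derived categories (Lemma~\ref{lem:Lemma2}); that is the well-known ingredient, not the crux.

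The paper fills this gap by a direct argument rather than by citing Thomason--Trobaugh: given $\mathcal{F}^\bullet$ with $j^*\mathcal{F}^\bullet$ perfect, one takes a locally free resolution truncated at a kernel term $\mathcal{K}^{-n}$, producing $\alpha\colon\mathcal{F}^\bullet\to\mathcal{K}^{-n}[n]$ with perfect cone (so $q(\alpha)$ is invertible); for $n$ large, perfection of $j^*\mathcal{F}^\bullet$ forces $j^*(\alpha)=0$, hence $\alpha$ factors through $\mathbf{D}^b_Z({\rm coh}\;\mathbb{X})$, exhibiting $q(\mathcal{F}^\bullet)$ as a direct summand of an object of $q(\mathbf{D}^b_Z({\rm coh}\;\mathbb{X}))$. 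Some such syzygy/truncation argument (or an explicit appeal to the extension theorem for perfect complexes) must be supplied; as written, your proof establishes only that $F$ is a quotient functor, not that its kernel vanishes.
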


We will see that  Theorem \ref{thm:1} implies Propositions \ref{prop:Orlov1} and \ref{prop:Orlov2} and their
converses; see Corollaries \ref{cor:Cor1} and \ref{cor:Cor2}.

We also prove a noncommutative version of Theorem \ref{thm:1},
which implies  a result in \cite{Ch09} and its converse; see
Theorem \ref{thm:2} and Corollary \ref{cor:Cor3}. Note that for a
left-noetherian $R$ one has the notion of singularity category $\mathbf{D}_{\rm sg}(R)$ of $R$ which is defined analogously  to the singularity
category of a scheme. Let us remark that the singularity category
$\mathbf{D}_{\rm sg}(R)$ was studied by Buchweitz in \cite{Buc87} under
the name ``stable derived category"; also see \cite{Hap91}.

\section{The Proof of Theorem \ref{thm:1}}

In this section we will prove Theorem \ref{thm:1}.

Recall that for a triangle functor $F\colon \mathcal{T}\rightarrow
\mathcal{T}'$ its essential kernel ${\rm Ker}\; F$ is a thick
triangulated category of $\mathcal{T}$ and then $F$ induces uniquely  a
triangle functor $\bar{F} \colon \mathcal{T}/{{\rm Ker}\;
F}\rightarrow \mathcal{T}'$. We say that the functor $F$ is a
\emph{quotient functor} provided that the induced functor $\bar{F}$
is an equivalence.

The following lemma is easy.

\begin{lem}\label{lem:Lemma1}
Let $F\colon \mathcal{T}\rightarrow \mathcal{T}'$ be a quotient
functor. Assume that $\mathcal{N}\subseteq \mathcal{T}$ and
$\mathcal{N}'\subseteq \mathcal{T}'$ are triangulated subcategories
such that $F(\mathcal{N})\subseteq \mathcal{N}'$. Then the induced
functor $\mathcal{T}/\mathcal{N}\rightarrow
\mathcal{T}'/\mathcal{N}'$ by $F$ is a quotient functor.
\end{lem}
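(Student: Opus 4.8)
The plan is to reduce the statement to a completely formal fact about Verdier quotients, using the universal property of localization. Given the quotient functor $F\colon \mathcal{T}\rightarrow \mathcal{T}'$, the composite $\mathcal{T}\xrightarrow{F}\mathcal{T}'\rightarrow \mathcal{T}'/\mathcal{N}'$ annihilates $\mathcal{N}$ (since $F(\mathcal{N})\subseteq \mathcal{N}'$), hence factors uniquely through $\mathcal{T}/\mathcal{N}$, giving the induced triangle functor $G\colon \mathcal{T}/\mathcal{N}\rightarrow \mathcal{T}'/\mathcal{N}'$. I must show $G$ is a quotient functor, i.e. that the induced functor $\overline{G}\colon (\mathcal{T}/\mathcal{N})/{\rm Ker}\,G\rightarrow \mathcal{T}'/\mathcal{N}'$ is an equivalence.

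First I would identify ${\rm Ker}\,G$. Since $\overline{F}\colon \mathcal{T}/{\rm Ker}\,F\xrightarrow{\sim}\mathcal{T}'$ is an equivalence, we may as well assume $F$ itself is the Verdier quotient functor $\mathcal{T}\rightarrow \mathcal{T}/{\rm Ker}\,F$; write $\mathcal{K}={\rm Ker}\,F$. Then $G$ is the canonical functor $\mathcal{T}/\mathcal{N}\rightarrow (\mathcal{T}/\mathcal{K})/\mathcal{N}'$, and its kernel is the thick closure in $\mathcal{T}/\mathcal{N}$ of the image of the preimage of $\mathcal{N}'$ in $\mathcal{T}$; but since $\mathcal{N}'$ may be larger than $F(\mathcal{N})$, the cleanest route is to invoke the standard ``composition of localizations'' isomorphism. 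Concretely, both $(\mathcal{T}/\mathcal{N})/{\rm Ker}\,G$ and $\mathcal{T}'/\mathcal{N}'$ receive canonical functors from $\mathcal{T}$ that send the same objects to isomorphisms, so by the universal property each factors through the other, and the two composites are identities by uniqueness in the universal property. This is the heart of the argument.

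The main obstacle is purely bookkeeping: one must verify that the two universal maps are mutually inverse, which comes down to checking that a morphism $\mathcal{T}\rightarrow \mathcal{D}$ inverting the morphisms inverted in $\mathcal{T}'/\mathcal{N}'$ is the same as one inverting the morphisms inverted in $(\mathcal{T}/\mathcal{N})/{\rm Ker}\,G$. For this I would describe both classes of inverted morphisms explicitly: a morphism of $\mathcal{T}$ becomes invertible in $\mathcal{T}'/\mathcal{N}'$ exactly when its cone, after applying $F$, lies in $\mathcal{N}'$; and it becomes invertible in $(\mathcal{T}/\mathcal{N})/{\rm Ker}\,G$ exactly when its cone lies in the preimage under $\mathcal{T}\rightarrow\mathcal{T}/\mathcal{N}$ of (the thick subcategory ${\rm Ker}\,G$), which by definition of $G$ is again precisely those objects $X$ with $F(X)\in\mathcal{N}'$. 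Hence the two localizations of $\mathcal{T}$ are performed at the same class of morphisms, so they are canonically equivalent, and tracing through the identifications shows this equivalence is exactly $\overline{G}$. I expect no genuine difficulty beyond this identification of the two saturated morphism classes, which is where a little care with the definition of the induced kernel is needed.
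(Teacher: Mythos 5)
Your argument is correct and takes essentially the same route as the paper: both identify $\mathcal{T}'$ with $\mathcal{T}/{\rm Ker}\,F$ and then recognize $\mathcal{T}'/\mathcal{N}'$ and $(\mathcal{T}/\mathcal{N})/{\rm Ker}\,G$ as quotients of $\mathcal{T}$ by the same subcategory (the preimage of $\mathcal{N}'$ under $F$, up to saturation), the paper by citing Verdier's composition-of-quotients corollary and you by re-deriving that corollary from the universal property of localization. The only point needing care is that ``cone lies in $\mathcal{N}'$'' should read ``cone becomes zero in $\mathcal{T}'/\mathcal{N}'$'', i.e.\ lies in the thick closure of $\mathcal{N}'$, but as you note this saturation does not change the localization.
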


\begin{proof}
Denote by $\mathcal{N}''$  the inverse image of $\mathcal{N}'$ under $F$.
Note that ${\rm Ker}\; F\subseteq \mathcal{N}''$ and by assumption
$\mathcal{N}\subseteq \mathcal{N}''$. We identify
$\mathcal{T}$ with $\mathcal{T}/{{\rm Ker}\; F}$ via $\bar{F}$. Then we have
$\mathcal{T}'/\mathcal{N}'\simeq \mathcal{T}/\mathcal{N}''\simeq
(\mathcal{T}/\mathcal{N}')/(\mathcal{N}''/\mathcal{N'})$; see
\cite[Chapitre 1, \S2, 4-3 Corollaire]{Ver77}. This proves that the
induced functor $\mathcal{T}/\mathcal{N}\rightarrow
\mathcal{T}'/\mathcal{N}'$ is a quotient functor.
\end{proof}

Let $\mathbb{X}$ be the scheme as in Section 1. Let $j\colon
U\hookrightarrow \mathbb{X}$ be an open immersion and let
$Z=\mathbb{X}\backslash U$ be the complement of $U$. The following lemma is
well known; see \cite[Lemma 2.2]{Or09}.

\begin{lem}\label{lem:Lemma2}
The inverse image functor $j^*\colon \mathbf{D}^b( {\rm coh}\;
\mathbb{X}) \rightarrow \mathbf{D}^b( {\rm coh}\; U)$ is a quotient
functor with ${\rm Ker}\; j^*=\mathbf{D}^b_Z({\rm coh}\;
\mathbb{X})$.
\end{lem}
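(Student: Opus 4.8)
The plan is to prove Lemma~\ref{lem:Lemma2} by combining the exactness of $j^*$ on coherent sheaves with the standard localization sequence for derived categories. First I would recall that since $j$ is an open immersion, $j^*\colon {\rm coh}\;\mathbb{X}\rightarrow {\rm coh}\; U$ is exact and essentially surjective: every coherent sheaf on $U$ extends to a coherent sheaf on $\mathbb{X}$ (using quasi-coherent extension by \cite[Chapter II, Ex. 5.15]{Har77} together with noetherianness to cut down to a coherent subsheaf that still restricts to the given sheaf). Hence the induced triangle functor $j^*\colon \mathbf{D}^b({\rm coh}\;\mathbb{X})\rightarrow \mathbf{D}^b({\rm coh}\; U)$ is essentially surjective and full up to the usual calculus-of-fractions adjustments.

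The core of the argument is to identify the essential kernel of $j^*$ with $\mathbf{D}^b_Z({\rm coh}\;\mathbb{X})$. One inclusion is clear: if a bounded complex has all cohomology sheaves supported in $Z$, then applying the exact functor $j^*$ kills each cohomology sheaf (as these restrict to zero on $U$), so $j^*$ sends it to an acyclic complex, i.e.\ to zero in $\mathbf{D}^b({\rm coh}\; U)$. For the reverse inclusion, if $j^*X\cong 0$ then all cohomology sheaves $H^i(X)$ satisfy $j^*H^i(X)=0$, which means $H^i(X)$ is supported in $Z$; hence $X\in\mathbf{D}^b_Z({\rm coh}\;\mathbb{X})$. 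The more substantive point is that $j^*$ is a \emph{quotient} functor, i.e.\ that the induced functor $\mathbf{D}^b({\rm coh}\;\mathbb{X})/\mathbf{D}^b_Z({\rm coh}\;\mathbb{X})\rightarrow \mathbf{D}^b({\rm coh}\; U)$ is an equivalence. Since we have already noted essential surjectivity, what remains is full faithfulness, and for this I would verify that $\mathbf{D}^b_Z({\rm coh}\;\mathbb{X})$, equivalently ${\rm thick}\langle {\rm coh}_Z\;\mathbb{X}\rangle$, is precisely the subcategory of objects that become zero, and that every morphism in $\mathbf{D}^b({\rm coh}\; U)$ lifts to a roof over $\mathbf{D}^b_Z({\rm coh}\;\mathbb{X})$: given $f\colon j^*X\rightarrow j^*Y$, one extends $f$ to a morphism in $\mathbf{D}^b({\rm coh}\;\mathbb{X})$ after replacing $X$ by a quasi-isomorphic complex, using that $j_*$ of a coherent sheaf on $U$ can be approximated by coherent sheaves on $\mathbb{X}$ and that the adjunction counit $j^*j_*\rightarrow {\rm id}$ is an isomorphism for the open immersion $j$.

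The cleanest way to package all of this is to quote the general localization theorem for Grothendieck abelian categories or, better, simply to cite \cite[Lemma 2.2]{Or09} (together with the classical statement in \cite[Chapter I]{Har66}), since the lemma is explicitly labelled as well known; I would present the proof above in compressed form as the justification. The main obstacle, and the only place where care is genuinely needed, is the verification of fullness of the induced functor — that is, showing every morphism between restrictions comes from a fraction with denominator supported in $Z$; everything else (exactness, the description of the kernel, essential surjectivity) is formal. I expect to dispatch this by the standard extension-of-coherent-sheaves argument combined with Verdier's characterization of when an exact functor annihilating a thick subcategory factors as a quotient, namely that it suffices to check the functor sends morphisms to zero exactly when their cones lie in the subcategory, which here follows from the support description of the cohomology sheaves.
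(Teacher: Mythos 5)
Your identification of the essential kernel is fine and matches what the paper calls the ``direct'' part: a bounded complex restricts to zero on $U$ exactly when all its cohomology sheaves are supported in $Z$. The real content of the lemma, as you yourself note, is that the induced functor $\mathbf{D}^b({\rm coh}\;\mathbb{X})/\mathbf{D}^b_Z({\rm coh}\;\mathbb{X})\rightarrow \mathbf{D}^b({\rm coh}\;U)$ is an equivalence, and here your route diverges from the paper's. The paper does not attempt any direct verification of fullness or essential surjectivity at the level of complexes: it observes that ${\rm coh}_Z\;\mathbb{X}$ is a Serre subcategory of ${\rm coh}\;\mathbb{X}$, that $j^*$ induces an equivalence of abelian categories ${\rm coh}\;\mathbb{X}/{\rm coh}_Z\;\mathbb{X}\simeq{\rm coh}\;U$ in the sense of Gabriel, and then invokes Miyachi's theorem \cite[Theorem 3.2]{Miy91}, which says precisely that the bounded derived category of a Serre quotient is the Verdier quotient of the bounded derived category by the complexes with cohomology in the Serre subcategory. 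All the roof-lifting and extension-of-morphisms difficulties you flag are absorbed into that general theorem (whose input at the abelian level is exactly the classical extension of coherent sheaves and morphisms along an open immersion).

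As written, your direct approach has a genuine gap at the step you correctly single out as the hard one. Extending a single coherent sheaf from $U$ to $\mathbb{X}$ does not immediately give essential surjectivity on bounded complexes (the differentials must be extended compatibly), and your fullness argument is only gestured at: $j_*\mathcal{G}$ is merely quasi-coherent, $j_*$ is not exact, and lifting a morphism $j^*X\rightarrow j^*Y$ represented by a roof requires extending a quasi-isomorphism as well, none of which is carried out. Moreover, the ``Verdier characterization'' you appeal to is not a correct criterion: any triangle functor killing a thick subcategory $\mathcal{N}$ factors through the quotient, and the condition you state (morphisms go to zero iff their cones lie in $\mathcal{N}$) is neither the right condition nor one you verify. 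If you want to avoid citing \cite[Lemma 2.2]{Or09} outright, the cleanest repair is exactly the paper's: reduce to the abelian statement via Gabriel and then apply Miyachi.
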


\begin{proof}
It is direct to see that ${\rm Ker}\; j^*=\mathbf{D}^b_Z({\rm coh}\;
\mathbb{X})$. Note that the inverse image functor $j^*\colon {\rm
coh}\; \mathbb{X}\rightarrow {\rm coh}\; U$ is exact and its
essential kernel is ${\rm coh}_Z\; \mathbb{X}$. In particular, ${\rm
coh}_Z\; \mathbb{X}\subseteq {\rm coh}\; \mathbb{X}$ is a Serre
subcategory. The functor $j^*$ induces an equivalence ${\rm coh}\;
\mathbb{X}/{\rm coh}_Z\; \mathbb{X}\simeq {\rm coh}\; U$; compare \cite[Chapter VI, Proposition 3]{Ga62}.
Here ${\rm
coh}\; \mathbb{X}/{\rm coh}_Z\; \mathbb{X}$ is the quotient abelian
category in the sense of \cite{Ga62}. Now the result follows
immediately from \cite[Theorem 3.2]{Miy91}.
\end{proof}

\vskip 5pt

 \noindent {\bf Proof of Theorem \ref{thm:1}:}\quad  By
combining Lemmas \ref{lem:Lemma1} and \ref{lem:Lemma2} we infer that
the functor $\bar{j^*}\colon \mathbf{D}_{\rm sg}(\mathbb{X})
\rightarrow \mathbf{D}_{\rm sg}(U)$ is a quotient functor. Then it
suffices to show that ${\rm Ker}\; \bar{j^*}={\rm thick}\langle
q({\rm coh}_Z\; \mathbb{X}) \rangle$. Since $j^*$ vanishes on ${\rm
coh}_Z\; \mathbb{X}$, we have ${\rm thick}\langle q({\rm coh}_Z\;
\mathbb{X}) \rangle\subseteq {\rm Ker}\; \bar{j^*}$.

 To show ${\rm Ker}\; \bar{j^*} \subseteq {\rm thick}\langle q({\rm coh}_Z\; \mathbb{X})
\rangle$, let $\mathcal{F}^\bullet$ be a bounded complex of sheaves
lying in ${\rm Ker}\; \bar{j^*}$, that is, the complex
$j^*(\mathcal{F}^\bullet)$ is perfect. We will show that
$\mathcal{F}^\bullet \in {\rm thick}\langle q({\rm coh}_Z\;
\mathbb{X}) \rangle$. Let us remark that the argument below is the same
as the one in the proof of \cite[Proposition 2.7]{Or09}. Since
$\mathbb{X}$ has enough locally free sheaves of finite rank, there
exists a bounded complex $\cdots \rightarrow 0\rightarrow
\mathcal{K}^{-n}\rightarrow \mathcal{E}^{1-n}\rightarrow
\mathcal{E}^{2-n}\rightarrow \cdots \rightarrow
\mathcal{E}^m\rightarrow \cdots $ with each $\mathcal{E}^j$ locally
free which is quasi-isomorphic to $\mathcal{F}^\bullet$; moreover,
we may take $n$ arbitrarily large; see \cite[Chapter I, Lemma
4.6(1)]{Har66}. By a brutal truncation we obtain a morphism
$\alpha\colon \mathcal{F}^\bullet\rightarrow \mathcal{K}^{-n}[n]$ in
$\mathbf{D}^b({\rm coh}\; \mathbb{X})$ whose cone is a perfect
complex. Hence $q(\alpha)$ is an isomorphism. On the other hand,
since $j^*(\mathcal{F}^\bullet)$ is perfect, taking $n$ large enough
we get that $j^*(\alpha)$ is zero (in $\mathbf{D}^b({\rm coh}\;
U)$); see \cite[Proposition 1.11]{Or06}. By Lemma \ref{lem:Lemma2} this implies that $\alpha$ factors
through a complex in $\mathbf{D}^b_Z({\rm coh}\; \mathbb{X})$ and
then the isomorphism $q(\alpha)$ factors through an object in
$q(\mathbf{D}^b_Z({\rm coh}\; \mathbb{X}))$. This implies that
$\mathcal{F}^\bullet$ is a direct summand of an object in
$q(\mathbf{D}^b_Z({\rm coh}\; \mathbb{X}))$. Recall that
$\mathbf{D}^b_Z( {\rm coh}\; \mathbb{X})={\rm thick}\langle {\rm
coh}_Z\; \mathbb{X}\rangle$. This implies that $\mathcal{F}^\bullet
\in {\rm thick}\langle q({\rm coh}_Z\; \mathbb{X}) \rangle$. \hfill
$\square$

\vskip 5pt

Theorem \ref{thm:1} implies  Proposition
\ref{prop:Orlov1} and its converse. Recall that ${\rm Sing}(\mathbb{X})$ denotes the
singular locus of $\mathbb{X}$.

\begin{cor}\label{cor:Cor1}
Use the notation as above. Then $\bar{j^*}\colon  \mathbf{D}_{\rm
sg}(\mathbb{X}) \rightarrow \mathbf{D}_{\rm sg}(U)$ is an
equivalence if and only if ${\rm Sing}(\mathbb{X})\subseteq U$.
\end{cor}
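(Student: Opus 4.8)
The plan is to use Theorem \ref{thm:1} to reduce the assertion to a statement purely about the subcategory ${\rm thick}\langle q({\rm coh}_Z\;\mathbb{X})\rangle$, namely that this subcategory is trivial if and only if ${\rm Sing}(\mathbb{X})\subseteq U$. Indeed, by Theorem \ref{thm:1} the functor $\bar{j^*}$ factors through the Verdier quotient $\mathbf{D}_{\rm sg}(\mathbb{X})/{\rm thick}\langle q({\rm coh}_Z\;\mathbb{X})\rangle$ and the induced functor is a triangle equivalence. Hence $\bar{j^*}$ itself is an equivalence precisely when the canonical quotient functor $\mathbf{D}_{\rm sg}(\mathbb{X})\rightarrow \mathbf{D}_{\rm sg}(\mathbb{X})/{\rm thick}\langle q({\rm coh}_Z\;\mathbb{X})\rangle$ is an equivalence, and this happens if and only if ${\rm thick}\langle q({\rm coh}_Z\;\mathbb{X})\rangle$ is the zero subcategory of $\mathbf{D}_{\rm sg}(\mathbb{X})$.

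So the real content is the equivalence: ${\rm thick}\langle q({\rm coh}_Z\;\mathbb{X})\rangle = 0$ if and only if ${\rm Sing}(\mathbb{X})\subseteq U$, equivalently ${\rm Sing}(\mathbb{X})\cap Z = \emptyset$. For the ``if'' direction, suppose ${\rm Sing}(\mathbb{X})\subseteq U$. Then $\mathbb{X}$ is regular along $Z$, so every coherent sheaf $\mathcal{G}$ supported in $Z$ has finite projective dimension locally, hence (using that $\mathbb{X}$ has enough locally free sheaves of finite rank and is noetherian separated of finite Krull dimension) $\mathcal{G}$ is a perfect complex. Thus $q(\mathcal{G})=0$ for all $\mathcal{G}\in {\rm coh}_Z\;\mathbb{X}$, so ${\rm thick}\langle q({\rm coh}_Z\;\mathbb{X})\rangle=0$. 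For the ``only if'' direction, suppose $x\in {\rm Sing}(\mathbb{X})\cap Z$. I want to exhibit a sheaf supported in $Z$ that is nonzero in $\mathbf{D}_{\rm sg}(\mathbb{X})$. The local ring $\mathcal{O}_{\mathbb{X},x}$ is not regular, so its residue field $k(x)$ has infinite projective dimension over $\mathcal{O}_{\mathbb{X},x}$; pushing forward a suitable coherent sheaf on $\mathbb{X}$ supported at (the closure of) $x$ with stalk $k(x)$ at $x$ — for instance $\mathcal{O}_{\mathbb{X}}/\mathfrak{m}_x$ appropriately interpreted as a coherent sheaf on the reduced closed subscheme — one gets a sheaf $\mathcal{S}$ supported in $Z$ whose stalk at $x$ is not perfect over $\mathcal{O}_{\mathbb{X},x}$; hence $\mathcal{S}$ is not a perfect complex on $\mathbb{X}$ and $q(\mathcal{S})\neq 0$. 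Therefore ${\rm thick}\langle q({\rm coh}_Z\;\mathbb{X})\rangle\neq 0$.

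The main obstacle is the ``only if'' direction, and specifically making the localization argument clean: one must pass between the non-regularity of the local ring $\mathcal{O}_{\mathbb{X},x}$ and the non-perfectness of a globally defined coherent sheaf, using that being perfect is a local condition and that, for a noetherian local ring, a finitely generated module is perfect (i.e.\ has finite projective dimension) over the whole ring if and only if it does locally, so the residue field being of infinite projective dimension at a singular point obstructs perfectness. A careful statement uses that $q$ sends a complex to zero exactly when that complex is perfect, together with the characterization recalled in the introduction that under our hypotheses perfect complexes are exactly those quasi-isomorphic to bounded complexes of locally free sheaves of finite rank; the stalk at $x$ of such a complex is a bounded complex of finite free $\mathcal{O}_{\mathbb{X},x}$-modules, which cannot be quasi-isomorphic to $k(x)$ when $\mathcal{O}_{\mathbb{X},x}$ is singular. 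Everything else is formal from Theorem \ref{thm:1} and the definition of the Verdier quotient.
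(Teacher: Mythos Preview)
Your proposal is correct and follows essentially the same route as the paper: reduce via Theorem~\ref{thm:1} to the statement that ${\rm coh}_Z\;\mathbb{X}\subseteq {\rm perf}\;\mathbb{X}$ if and only if ${\rm Sing}(\mathbb{X})\subseteq U$, and for the contrapositive exhibit a non-perfect sheaf supported in $Z$. The paper streamlines the ``only if'' direction by passing directly to a singular \emph{closed} point $x\in Z$ (specializations of singular points remain singular, and a noetherian scheme has a closed point in every nonempty closed subset) and using the skyscraper sheaf $k(x)$, which avoids your closure-of-a-point construction.
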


\begin{proof}
By Theorem \ref{thm:1} it suffices to show that ${\rm
coh}_Z(X)\subseteq {\rm perf}\; \mathbb{X}$ if and only if ${\rm
Sing}(\mathbb{X})\subseteq U$. Recall that a coherent sheaf
$\mathcal{F}$ on $\mathbb{X}$ is perfect if and only if at each  point $x\in
X$, the stalk $\mathcal{F}_x$, as an $\mathcal{O}_{X,x}$-module,  has finite
projective dimension; see \cite[Chapter III, Ex. 6.5]{Har77}.
Then the ``if" part follows. To see the ``only if" part, assume on the
contrary that ${\rm Sing}(\mathbb{X})\nsubseteq U$. Then there is a singular closed
point $x\in Z$. The skyscraper sheaf $k(x)\in {\rm coh}_Z\;
\mathbb{X}$ is not perfect. A contradiction!
\end{proof}

Theorem \ref{thm:1} implies Proposition \ref{prop:Orlov2} and its converse.

\begin{cor}\label{cor:Cor2}
Use the notation as above. Then $\mathbf{D}_{\rm
sg}(\mathbb{X})={\rm thick}\langle q({\rm coh}_Z\;
\mathbb{X})\rangle$ if and only if ${\rm Sing}(\mathbb{X})\subseteq
Z$.
\end{cor}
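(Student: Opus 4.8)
The plan is to deduce Corollary \ref{cor:Cor2} from Theorem \ref{thm:1} together with the local characterization of perfect complexes, exactly in parallel with the proof of Corollary \ref{cor:Cor1}. By Theorem \ref{thm:1}, the triangle functor $\bar{j^*}$ identifies $\mathbf{D}_{\rm sg}(U)$ with $\mathbf{D}_{\rm sg}(\mathbb{X})/{\rm thick}\langle q({\rm coh}_Z\;\mathbb{X})\rangle$. Hence the equality $\mathbf{D}_{\rm sg}(\mathbb{X})={\rm thick}\langle q({\rm coh}_Z\;\mathbb{X})\rangle$ holds if and only if $\mathbf{D}_{\rm sg}(U)$ is trivial, which in turn holds if and only if $U$ is a regular scheme, i.e. ${\rm Sing}(U)=\emptyset$. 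Since $U$ is an open subscheme of $\mathbb{X}$, we have ${\rm Sing}(U)={\rm Sing}(\mathbb{X})\cap U$, so ${\rm Sing}(U)=\emptyset$ is equivalent to ${\rm Sing}(\mathbb{X})\subseteq Z$.

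The one point that needs care is the equivalence ``$\mathbf{D}_{\rm sg}(U)$ trivial $\iff$ $U$ regular''. This is the scheme-theoretic fact already recalled in the introduction of the paper (a scheme is regular precisely when its singularity category vanishes), so I would simply cite it, pointing to \cite{Or04}. Alternatively, and perhaps more in the spirit of the proof of Corollary \ref{cor:Cor1}, I would argue directly: if ${\rm Sing}(\mathbb{X})\subseteq Z$ then every coherent sheaf on $U$ has finite projective dimension at each stalk, hence is perfect by \cite[Chapter III, Ex. 6.5]{Har77}, so ${\rm perf}\;U=\mathbf{D}^b({\rm coh}\;U)$ and $\mathbf{D}_{\rm sg}(U)=0$; conversely, if ${\rm Sing}(\mathbb{X})\nsubseteq Z$, pick a singular closed point $x\in U$, and then the skyscraper sheaf $k(x)$ is a nonzero object of $\mathbf{D}_{\rm sg}(U)$ since it is not perfect.

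I expect no serious obstacle here; the work has all been done in Theorem \ref{thm:1}, and the remaining input is the standard local criterion for perfection that was already used in Corollary \ref{cor:Cor1}. The only mild subtlety is making sure the auxiliary scheme $U$ inherits the standing hypotheses of Section 1 (noetherian, separated, finite Krull dimension, enough locally free sheaves of finite rank) so that Theorem \ref{thm:1} and the characterization of perfect complexes apply to it; each of these passes to open subschemes, so this is routine and I would note it in one sentence.

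\begin{proof}
By Theorem \ref{thm:1}, the functor $\bar{j^*}$ induces a triangle equivalence $\mathbf{D}_{\rm sg}(\mathbb{X})/{\rm thick}\langle q({\rm coh}_Z\;\mathbb{X})\rangle \simeq \mathbf{D}_{\rm sg}(U)$. Hence $\mathbf{D}_{\rm sg}(\mathbb{X})={\rm thick}\langle q({\rm coh}_Z\;\mathbb{X})\rangle$ if and only if $\mathbf{D}_{\rm sg}(U)$ is trivial, equivalently (\cite{Or04}) if and only if the scheme $U$ is regular; note that $U$, being open in $\mathbb{X}$, is again noetherian separated of finite Krull dimension with enough locally free sheaves of finite rank, so this criterion applies. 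Since ${\rm Sing}(U)={\rm Sing}(\mathbb{X})\cap U$, the scheme $U$ is regular if and only if ${\rm Sing}(\mathbb{X})\subseteq Z$. This proves the claim.
\end{proof}
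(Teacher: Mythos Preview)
Your proof is correct and follows the same route as the paper: apply Theorem \ref{thm:1} to reduce to the equivalence ``$\mathbf{D}_{\rm sg}(U)$ trivial $\iff$ $U$ regular $\iff$ ${\rm Sing}(\mathbb{X})\subseteq Z$''. The paper's version is terser, omitting the verification that $U$ inherits the standing hypotheses and the explicit identification ${\rm Sing}(U)={\rm Sing}(\mathbb{X})\cap U$, but the argument is the same.
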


\begin{proof}
Recall that $\mathbf{D}_{\rm sg}(U)$ is trivial if and only if $U$
is regular, or equivalently, ${\rm Sing}(\mathbb{X})\subseteq Z$.
Then we are done by Theorem \ref{thm:1}.
\end{proof}

We would like to point out that Corollary \ref{cor:Cor2} generalizes slightly a result
by Keller, Murfet and Van den Bergh (\cite[Proposition A.2]{KMV09}). Let $R$ be
a commutative noetherian local ring with its maximal ideal
$\mathfrak{m}$ and its residue field $k=R/\mathfrak{m}$.  Set $\mathbb{X}={\rm Spec}(R)$ and
$U=\mathbb{X}\backslash \{\mathfrak{m}\}$. We identify ${\rm coh}\; \mathbb{X}$ with
the category of finitely generated $R$-modules, and then ${\rm coh}_{\{\mathfrak{m}\}} \; \mathbb{X}$ is identified
with the category of finite length $R$-modules. We infer from Corollary \ref{cor:Cor2} that
${\rm Sing}(\mathbb{X})\subseteq \{\mathfrak{m}\}$, that is, the ring $R$ is
an \emph{isolated singularity} if and only if $\mathbf{D}_{\rm
sg}(\mathbb{X})={\rm thick}\langle q(k)\rangle$, that is, the singularity category
$\mathbf{D}_{\rm sg}(\mathbb{X})$ is generated by the residue field
$k$.

\section{A noncommutative version of Theorem \ref{thm:1}}

In this section we will prove a noncommutative version of Theorem
\ref{thm:1}.

Let $R$ be a left-noetherian ring. Denote by $R\mbox{-mod}$ the
category of finitely generated left $R$-modules and by
$\mathbf{D}^b(R\mbox{-mod})$ the bounded derived category. We denote
by $R\mbox{-proj}$ the full subcategory of $R\mbox{-mod}$ consisting
of projective modules and by $\mathbf{K}^b(R\mbox{-proj})$ the
bounded homotopy category. We may view $\mathbf{K}^b(R\mbox{-proj})$
as a thick triangulated subcategory of $\mathbf{D}^b(R\mbox{-mod})$ (compare \cite[1.2]{Buc87} and \cite[1.4]{Hap91}).
Following Orlov (\cite{Or05}), the \emph{singularity category} of the ring $R$ is
defined to be the Verdier quotient triangulated category
$\mathbf{D}_{\rm
sg}(R)=\mathbf{D}^b(R\mbox{-mod})/\mathbf{K}^b(R\mbox{-proj})$; compare \cite{Buc87, Hap91}. We
denote by $q\colon \mathbf{D}^b(R\mbox{-mod})\rightarrow
\mathbf{D}_{\rm sg}(R)$ the quotient functor.

Let $e\in R$ be an idempotent. It is direct to see that $eRe$ is a
left-noetherian ring. Note that $eR$ has a natural
$eRe$-$R$-bimodule structure inherited from the multiplication of
$R$. Consider the Schur functor
$$S_e=eR\otimes_R- \colon R\mbox{-mod}\longrightarrow eRe\mbox{-mod}.$$
It is an exact functor. We denote by $\mathcal{B}_e$ the essential
kernel of $S_e$. Note that an $R$-module $M$ lies in $\mathcal{B}_e$
if and only if $eM=0$, and if and only if $(1-e)M=M$. The Schur
functor $S_e$ extends to a triangle functor
$\mathbf{D}^b(R\mbox{-mod})\rightarrow
\mathbf{D}^b(eRe\mbox{-mod})$, which is still denoted by $S_e$.

We assume that the left $eRe$-module $_{eRe}eR$ has finite
projective dimension. In this case, the functor $S_e\colon \mathbf{D}^b(R\mbox{-mod})
\rightarrow \mathbf{D}^b(eRe\mbox{-mod})$ sends
$\mathbf{K}^b(R\mbox{-proj})$ to $\mathbf{K}^b(eRe\mbox{-proj})$.
Then it induces uniquely a triangle functor $\bar{S_e}\colon \mathbf{D}_{\rm
sg}(R)\rightarrow \mathbf{D}_{\rm sg}(eRe)$. The question on when
the functor $\bar{S_e}$ is an equivalence  was studied
in \cite{Ch09}.

The following result can be viewed as a (possibly naive) noncommutative version of
Theorem \ref{thm:1}.

\begin{thm}\label{thm:2}
Let $R$ be  a left-noetherian ring and let $e\in R$ be an
idempotent. Assume that the left $eRe$-module $_{eRe}eR$ has finite
projective dimension. Then the triangle functor $\bar{S_e}\colon
\mathbf{D}_{\rm sg}(R)\rightarrow \mathbf{D}_{\rm sg}(eRe)$ induces
a triangle equivalence
$$\mathbf{D}_{\rm sg}(R)/{\rm thick}\langle q(\mathcal{B}_e)\rangle\; \simeq \; \mathbf{D}_{\rm sg}(eRe).$$
\end{thm}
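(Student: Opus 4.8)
The plan is to mimic exactly the strategy used for Theorem~\ref{thm:1}, replacing the geometric input (Lemma~\ref{lem:Lemma2}) by its module-theoretic analogue. First I would record the noncommutative counterpart of Lemma~\ref{lem:Lemma2}: the Schur functor $S_e\colon \mathbf{D}^b(R\mbox{-mod})\to \mathbf{D}^b(eRe\mbox{-mod})$ is a quotient functor with $\mathrm{Ker}\; S_e=\mathbf{D}^b_{\mathcal{B}_e}(R\mbox{-mod})$, the full triangulated subcategory of complexes whose cohomologies lie in $\mathcal{B}_e$. For this one observes that $\mathcal{B}_e$ is a Serre subcategory of $R\mbox{-mod}$ (it is the essential kernel of the exact functor $S_e$), that $S_e$ induces an equivalence $R\mbox{-mod}/\mathcal{B}_e\simeq eRe\mbox{-mod}$ — here the quasi-inverse is $Re\otimes_{eRe}-$ followed by the quotient functor, and one checks the unit and counit become isomorphisms modulo $\mathcal{B}_e$ exactly as in the classical recollement for $eRe$ — and then applies \cite[Theorem 3.2]{Miy91} to promote this to the statement about derived categories. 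One also notes $\mathbf{D}^b_{\mathcal{B}_e}(R\mbox{-mod})={\rm thick}\langle \mathcal{B}_e\rangle$ by the standard truncation argument (as in \cite[Chapter I, Lemma 7.2(4)]{Har66}).

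Next, combining this lemma with Lemma~\ref{lem:Lemma1} (applied to the quotient functor $S_e$, with $\mathcal{N}=\mathbf{K}^b(R\mbox{-proj})$ and $\mathcal{N}'=\mathbf{K}^b(eRe\mbox{-proj})$, the inclusion $S_e(\mathcal{N})\subseteq\mathcal{N}'$ being guaranteed by the hypothesis that $_{eRe}eR$ has finite projective dimension) yields that $\bar{S_e}\colon \mathbf{D}_{\rm sg}(R)\to \mathbf{D}_{\rm sg}(eRe)$ is a quotient functor. It then remains to identify $\mathrm{Ker}\; \bar{S_e}$ with ${\rm thick}\langle q(\mathcal{B}_e)\rangle$. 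The inclusion ${\rm thick}\langle q(\mathcal{B}_e)\rangle\subseteq \mathrm{Ker}\; \bar{S_e}$ is immediate since $S_e$ vanishes on $\mathcal{B}_e$.

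For the reverse inclusion I would run the same truncation argument as in the proof of Theorem~\ref{thm:1}. Take a bounded complex $M^\bullet$ with $S_e(M^\bullet)$ isomorphic in $\mathbf{D}_{\rm sg}(eRe)$ to zero, i.e.\ $S_e(M^\bullet)\in\mathbf{K}^b(eRe\mbox{-proj})$. Since $R\mbox{-mod}$ has enough projectives, resolve $M^\bullet$ by a bounded-above complex of finitely generated projectives and brutally truncate far to the left: this produces a morphism $\alpha\colon M^\bullet\to P[n]$ (with $P$ a finitely generated projective, $n\gg 0$) whose cone lies in $\mathbf{K}^b(R\mbox{-proj})$, so $q(\alpha)$ is an isomorphism in $\mathbf{D}_{\rm sg}(R)$. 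Because $S_e(M^\bullet)$ is perfect, for $n$ large enough $S_e(\alpha)=0$ in $\mathbf{D}^b(eRe\mbox{-mod})$ (the Hom out of a perfect complex into a complex concentrated in a sufficiently low degree vanishes; cf.\ \cite[Proposition 1.11]{Or06}). By the quotient-functor description $\mathrm{Ker}\; S_e=\mathbf{D}^b_{\mathcal{B}_e}(R\mbox{-mod})$, the vanishing of $S_e(\alpha)$ forces $\alpha$ to factor through an object of $\mathbf{D}^b_{\mathcal{B}_e}(R\mbox{-mod})$, hence $q(\alpha)$ factors through $q(\mathbf{D}^b_{\mathcal{B}_e}(R\mbox{-mod}))$. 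Since $q(\alpha)$ is an isomorphism, $M^\bullet$ is a direct summand of an object of $q(\mathbf{D}^b_{\mathcal{B}_e}(R\mbox{-mod}))={\rm thick}\langle q(\mathcal{B}_e)\rangle$, so $M^\bullet\in{\rm thick}\langle q(\mathcal{B}_e)\rangle$. As every object of $\mathbf{D}_{\rm sg}(R)$ is represented by such an $M^\bullet$, this finishes the proof. The main obstacle I anticipate is verifying cleanly that $S_e$ is a quotient functor onto $\mathbf{D}^b(eRe\mbox{-mod})$ with the stated kernel: the abelian-level statement $R\mbox{-mod}/\mathcal{B}_e\simeq eRe\mbox{-mod}$ needs the finiteness hypothesis on $_{eRe}eR$ (or at least local finiteness) to control the right adjoint $Re\otimes_{eRe}-$ at the derived level, and one must be careful that the comparison with \cite[Theorem 3.2]{Miy91} applies in this generality; everything else is a faithful transcription of the commutative argument.
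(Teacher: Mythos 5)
Your proposal is correct and follows essentially the same route as the paper: the paper likewise combines Lemma~\ref{lem:Lemma1} with the quotient-functor statement for $S_e$ (which it simply cites as \cite[Lemma 2.2]{Ch09} rather than re-deriving via Gabriel quotients and \cite[Theorem 3.2]{Miy91} as you do), and then repeats verbatim the truncation argument of Theorem~\ref{thm:1} with projective modules in place of locally free sheaves. One small slip: the target of $\alpha$ is $K[n]$ with $K$ the $n$-th syzygy of $M^\bullet$ (finitely generated but in general \emph{not} projective --- otherwise $q(K[n])=0$ and the argument would collapse); your construction by brutal truncation of a projective resolution produces exactly this syzygy, so the argument itself is unaffected.
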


Denote by $\mathcal{N}_e$ the full triangulated subcategory of
$\mathbf{D}^b(R\mbox{-mod})$ consisting of complexes with
cohomologies in $\mathcal{B}_e$. Note that $\mathcal{N}_e={\rm thick}\langle
\mathcal{B}_e\rangle$. We will use the following
well-known lemma.

\begin{lem}{\rm (\cite[Lemma 2.2]{Ch09})}\label{lem:Lemma3}
The triangle functor $S_e\colon \mathbf{D}^b(R\mbox{-{\rm
mod}})\rightarrow \mathbf{D}^b(eRe\mbox{-{\rm mod}})$ is a quotient
functor with ${\rm Ker}\; S_e=\mathcal{N}_e$.  \hfill $\square$
\end{lem}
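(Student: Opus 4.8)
The plan is to imitate the proof of Lemma~\ref{lem:Lemma2}, replacing the geometry by the corresponding ring theory. Two things must be shown: that ${\rm Ker}\; S_e=\mathcal{N}_e$, and that the induced functor $\mathbf{D}^b(R\mbox{-mod})/\mathcal{N}_e\to\mathbf{D}^b(eRe\mbox{-mod})$ is an equivalence. The identification of the kernel is routine: since $S_e=eR\otimes_R-$ is exact it commutes with the formation of cohomology, so for $X^\bullet\in\mathbf{D}^b(R\mbox{-mod})$ one has $S_e(X^\bullet)\cong 0$ if and only if $S_e(H^n(X^\bullet))=e\,H^n(X^\bullet)=0$ for every $n$, that is, if and only if every cohomology module of $X^\bullet$ lies in $\mathcal{B}_e$; by the definition of $\mathcal{N}_e$ this is exactly the condition $X^\bullet\in\mathcal{N}_e$.

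For the quotient-functor part I would first pass to the abelian level. The subcategory $\mathcal{B}_e=\{M\in R\mbox{-mod}\mid eM=0\}$ is the kernel of the exact functor $S_e$, hence a Serre subcategory of $R\mbox{-mod}$, and $S_e$ factors through the Gabriel quotient $R\mbox{-mod}/\mathcal{B}_e$. The induced functor $R\mbox{-mod}/\mathcal{B}_e\to eRe\mbox{-mod}$ is an equivalence by the standard behaviour of a Schur functor (cf.\ \cite[Chapter~VI, Proposition~3]{Ga62}, as in the proof of Lemma~\ref{lem:Lemma2}); the key ingredient is that $S_e$ admits the section $Re\otimes_{eRe}-\colon eRe\mbox{-mod}\to R\mbox{-mod}$, which is exact (since $Re=eRe\oplus(1-e)Re$ is projective as a right $eRe$-module), takes values in finitely generated modules (since $Re$ is a direct summand of ${}_RR$), and satisfies $S_e(Re\otimes_{eRe}M)=(eR\otimes_R Re)\otimes_{eRe}M\cong M$ naturally, whence $S_e$ is in particular essentially surjective. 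Having thus realized $S_e$ on the abelian level as the Gabriel localization at the Serre subcategory $\mathcal{B}_e$, I would invoke \cite[Theorem~3.2]{Miy91} to conclude that the extended triangle functor $S_e\colon\mathbf{D}^b(R\mbox{-mod})\to\mathbf{D}^b(eRe\mbox{-mod})$ is a quotient functor; combined with the identification of ${\rm Ker}\; S_e$ in the first paragraph, this is the assertion of the lemma.

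The step I expect to be the main obstacle is the passage from the abelian equivalence to the derived statement, i.e.\ checking that the hypotheses of \cite[Theorem~3.2]{Miy91} hold in this situation: one must be able to lift, up to quasi-isomorphism, every bounded complex over $eRe$, and every morphism between such complexes, along $S_e$. This is precisely what the exact section $Re\otimes_{eRe}-$ supplies: applied termwise, it sends a bounded complex over $eRe$ to a bounded complex over $R$ mapping back to it under $S_e$, so the verification is mechanical once that section is in hand; by contrast the kernel computation and the abelian-level equivalence are formal. (Since the lemma is quoted from \cite{Ch09}, one could of course simply cite it; the above is the natural self-contained argument.)
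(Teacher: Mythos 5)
The paper gives no argument for this lemma --- it is quoted from \cite[Lemma 2.2]{Ch09} --- so the natural benchmark is the paper's proof of its commutative twin, Lemma~\ref{lem:Lemma2}. Your outline (compute the kernel via exactness of $S_e$; realize $S_e$ as the Gabriel quotient of $R\mbox{-mod}$ by the Serre subcategory $\mathcal{B}_e$; conclude with \cite[Theorem 3.2]{Miy91}) is exactly that proof transposed to modules, and the kernel computation is correct. But the step you yourself single out as the crux rests on a false claim: $Re$ is \emph{not} in general projective as a right $eRe$-module, so $Re\otimes_{eRe}-$ is \emph{not} exact. The decomposition $Re=eRe\oplus(1-e)Re$ of right $eRe$-modules is correct and $eRe$ is free of rank one, but there is no reason for $(1-e)Re$ to be projective over $eRe$. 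For example, take $R=\begin{pmatrix}B&M\\0&B\end{pmatrix}$ with $B=k[x]/(x^2)$, $M=B/(x)$, and $e=\begin{pmatrix}0&0\\0&1\end{pmatrix}$; then $eRe\cong B$ and $Re\cong M\oplus B$ as right $eRe$-modules, which is not projective (and here $_{eRe}eR\cong B$ is projective, so this is consistent with the standing hypotheses of Section 3).

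This matters precisely where you lean on it. Termwise application of the (merely additive) section still lifts \emph{objects}: $S_e\bigl(Re\otimes_{eRe}Y^\bullet\bigr)\cong Y^\bullet$ as complexes, so essential surjectivity is indeed mechanical. But a non-exact functor applied termwise does not carry quasi-isomorphisms to quasi-isomorphisms, so roofs representing morphisms in $\mathbf{D}^b(eRe\mbox{-mod})$ do \emph{not} lift by naive termwise application; lifting morphisms (and null-homotopies, for faithfulness) requires the adjunction $Re\otimes_{eRe}-\dashv S_e$ and its counit $Re\otimes_{eRe}eX^\bullet\to X^\bullet$, whose cone lies in $\mathcal{N}_e$ because $S_e$ inverts it --- the point being that ``invertible in $\mathbf{D}^b(R\mbox{-mod})/\mathcal{N}_e$'' means ``cone killed by $S_e$,'' not ``quasi-isomorphism,'' so exactness of the section is neither available nor needed. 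Equivalently, the hypothesis of \cite[Theorem 3.2]{Miy91} to verify is that the Gabriel quotient functor $R\mbox{-mod}\to R\mbox{-mod}/\mathcal{B}_e\simeq eRe\mbox{-mod}$ admits an adjoint (here the left adjoint $Re\otimes_{eRe}-$, which preserves finite generation), not that this adjoint be exact. One further small gap: the section only gives essential surjectivity of $R\mbox{-mod}/\mathcal{B}_e\to eRe\mbox{-mod}$; full faithfulness of this induced functor is a separate (standard) verification via the counit. With these corrections your strategy does yield the lemma.
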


\vskip5pt

\noindent{\bf Proof of Theorem \ref{thm:2}:}\quad By combining
Lemmas \ref{lem:Lemma1} and \ref{lem:Lemma3} we infer that the
functor  $\bar{S_e}\colon \mathbf{D}_{\rm sg}(R)\rightarrow
\mathbf{D}_{\rm sg}(eRe)$ is a quotient functor.  Then it suffices
to show that ${\rm Ker}\; \bar{S_e}={\rm thick}\langle
q(\mathcal{B}_e)\rangle$. Then the same proof with the one of
Theorem \ref{thm:1} will work. We simply replace locally free sheaves of
finite rank by projective modules. We omit the details here. \hfill
$\square$

\vskip 5pt

We need some notions. An idempotent $e\in R$ is said to be
\emph{regular} provided that every module in $\mathcal{B}_{1-e}$ has
finite projective dimension; $e$ is said to be
\emph{singularly-complete} provided that the idempotent $1-e$ is
regular; see \cite{Ch09}.

We have the following immediate consequence of Theorem \ref{thm:2},
which contains \cite[Theorem 2.1]{Ch09} and its converse. Let us point out that this result
is quite useful to describe the singularity categories
for a certain class of rings; see \cite{Ch09}.

\begin{cor}\label{cor:Cor3}
Keep the assumption as above. Then  $\bar{S_e}\colon \mathbf{D}_{\rm
sg}(R)\rightarrow \mathbf{D}_{\rm sg}(eRe)$ is an equivalence if and
only if the idempotent $e$ is singularly-complete. \hfill $\square$
\end{cor}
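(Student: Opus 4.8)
The plan is to deduce Corollary \ref{cor:Cor3} from Theorem \ref{thm:2} by unwinding when the quotient $\mathbf{D}_{\rm sg}(R)/{\rm thick}\langle q(\mathcal{B}_e)\rangle$ coincides with $\mathbf{D}_{\rm sg}(R)$ itself, which happens precisely when ${\rm thick}\langle q(\mathcal{B}_e)\rangle$ is the zero subcategory, i.e. when every module in $\mathcal{B}_e$ becomes zero in the singularity category. Recall that an object of $\mathbf{D}^b(R\mbox{-mod})$ vanishes in $\mathbf{D}_{\rm sg}(R)$ if and only if it lies in $\mathbf{K}^b(R\mbox{-proj})$, equivalently (for a module) has finite projective dimension. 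So the first step is the equivalence: $\bar{S_e}$ is an equivalence $\iff$ ${\rm thick}\langle q(\mathcal{B}_e)\rangle = 0$ $\iff$ every module in $\mathcal{B}_e$ has finite projective dimension over $R$.

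The second step is to identify this last condition with singular-completeness of $e$. By definition, $e$ is singularly-complete precisely when $1-e$ is regular, and $1-e$ is regular precisely when every module in $\mathcal{B}_{1-(1-e)} = \mathcal{B}_e$ has finite projective dimension. Here I would just need to check the bookkeeping: the definition says $e'$ is regular when every module in $\mathcal{B}_{1-e'}$ has finite projective dimension, so taking $e' = 1-e$ gives the condition on $\mathcal{B}_e$, which is exactly what appeared in step one. Hence the two conditions match verbatim.

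Putting these together: if $e$ is singularly-complete, then ${\rm thick}\langle q(\mathcal{B}_e)\rangle = 0$, so Theorem \ref{thm:2} yields $\mathbf{D}_{\rm sg}(R) \simeq \mathbf{D}_{\rm sg}(R)/0 \simeq \mathbf{D}_{\rm sg}(eRe)$ via $\bar{S_e}$; conversely, if $\bar{S_e}$ is an equivalence, then its essential kernel ${\rm Ker}\;\bar{S_e} = {\rm thick}\langle q(\mathcal{B}_e)\rangle$ (again by Theorem \ref{thm:2}, since the induced functor on the quotient is faithful) is zero, forcing every module in $\mathcal{B}_e$ to have finite projective dimension, i.e. $e$ is singularly-complete.

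I do not anticipate a genuine obstacle here; the corollary is essentially a translation exercise once Theorem \ref{thm:2} is in hand. The only point requiring a little care is making sure that ${\rm thick}\langle q(\mathcal{B}_e)\rangle = 0$ really is equivalent to each individual object $q(M)$ with $M \in \mathcal{B}_e$ being zero — this is immediate because the zero subcategory is already thick and triangulated, so ${\rm thick}\langle q(\mathcal{B}_e)\rangle = 0$ iff $q(\mathcal{B}_e) \subseteq 0$ iff $q(M) = 0$ for all $M \in \mathcal{B}_e$ iff each such $M$ has finite projective dimension over $R$.
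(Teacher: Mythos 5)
Your proposal is correct and follows exactly the route the paper intends: the corollary is stated as an immediate consequence of Theorem \ref{thm:2}, obtained by observing that $\bar{S_e}$ is an equivalence precisely when ${\rm thick}\langle q(\mathcal{B}_e)\rangle=0$, i.e.\ when every module in $\mathcal{B}_e$ has finite projective dimension, which is the definition of $e$ being singularly-complete. Your bookkeeping with $\mathcal{B}_{1-(1-e)}=\mathcal{B}_e$ and the reduction of ${\rm thick}\langle q(\mathcal{B}_e)\rangle=0$ to $q(M)=0$ for each $M\in\mathcal{B}_e$ are both sound.
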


\bibliography{}

\vskip 10pt

 {\footnotesize \noindent Xiao-Wu Chen, Department of
Mathematics, University of Science and Technology of
China, Hefei 230026, P. R. China \\
Homepage: http://mail.ustc.edu.cn/$^\sim$xwchen \\
\emph{Current
address}: Institut fuer Mathematik, Universitaet Paderborn, 33095,
Paderborn, Germany}

\end{document}